\newcommand\esymbol[1]{\begin{circuitikz}
\draw (0,0) to [#1] (0.6,0); \end{circuitikz}}
\newcommand{\tr}{\mathop{\bf tr}}
\newcommand{\prox}{\mathbf{prox}}
\newcommand{\proj}{\mathbf{proj}}
\newcommand{\lm}{\mathbf{lm}}
\newcommand{\argmin}{\mathop{\rm argmin}}
\newcommand{\norm}[1]{\left\lVert#1\right\rVert}
\newcommand{\mnorm}[1]{{\left\vert\kern-0.25ex\left\vert\kern-0.25ex\left\vert #1 
    \right\vert\kern-0.25ex\right\vert\kern-0.25ex\right\vert}}
\newtheorem{theorem}{Theorem}
\newtheorem{lemma}{Lemma}
\newtheorem{corollary}{Corollary}
\newtheorem{remark}{Remark}
\newtheorem{assumption}{Assumption}
\newcommand{\ie}{{\it i.e.}}
\title{\LARGE \bf RC Circuits based Distributed Conditional Gradient Method
}
\author{Yue~Yu and Beh\c{c}et~A\c{c}\i kme\c{s}e
\thanks{
The authors are with the Department of Aeronautics and Astronautics, University of Washington, Seattle,
        WA 98195 USA (emails: 
        {{\tt\small yueyu@uw.edu,behcet@uw.edu})}.}%
}
\begin{document}

\maketitle
\thispagestyle{empty}
\pagestyle{empty}

\begin{abstract}
We consider distributed optimization on undirected connected graphs.
We propose a novel distributed conditional gradient method with \(O(1/\sqrt{k})\) convergence. Compared with existing methods, each iteration of our method uses both communication and linear minimization step only once rather than multiple times. We further extend our results to cases with composite local constraints. We demonstrate our results via examples on distributed matrix completion problem.   
\end{abstract}

\section{Introduction}
Distributed optimization aims to optimize sum of convex functions via only local computation and communication on a connected graph \(\mathcal{G}\) with node set \(\mathcal{V}\) and edge set \(\mathcal{E}\) \cite{bertsekas1989parallel,boyd2011distributed}, which takes the following form,
\begin{equation}
    \begin{array}{ll}
    \underset{x_1, \ldots, x_{|\mathcal{V}|}}{\mbox{minimize}}  & \sum_{i\in\mathcal{V}} f_i(x_i) \\
    \mbox{subject to}  &  x_i=x_j, \enskip \forall \{ij\}\in \mathcal{E},\\
    & x_i\in X_i, \enskip \forall i\in\mathcal{V}.
    \end{array}\label{opt: template}
\end{equation}
where \(f_i\) and \(X_i\) are respectively the convex objective function and convex feasible set available to node \(i\) only. Problem \eqref{opt: template} arises frequently from multi-agent applications such as distributed tracking and estimation  
\cite{li2002detection,lesser2012distributed,accikmecse2014decentralized}.

Many distributed optimization algorithms have been developed to solve \eqref{opt: template}. To ensure each local variable \(x_i\) remains in constraint set \(X_i\), different algorithms use different \emph{oracles}--computation subroutines called at each iteration--on each node \(i\in\mathcal{V}\). We list three of the most popular ones, where \(c_i\) denotes certain constants:
\begin{subequations}
\begin{align}
    \prox(c_i)=&\argmin_{x_i\in X_i}\,\, f_i(x_i)+\textstyle\frac{\rho}{2}\norm{x_i-c_i}_2^2,\label{oracle: prox} \\
    \proj(c_i)=&\argmin_{x_i\in X_i}\,\, \textstyle\frac{1}{2}\norm{x_i-c_i}_2^2,\label{oracle: proj}\\
    \lm(c_i)=&\argmin_{x_i\in X_i}\,\, \langle c_i, x_i\rangle.\label{oracle: lmo}
\end{align}
\end{subequations}

The first one is the \emph{proximal oracle} given by \eqref{oracle: prox}  with \(\rho>0\), which is widely used by distributed Alternating Direction Method of Multipliers (ADMM) \cite{wei2012distributed,shi2014linear,meng2015proximal}. Since exact evaluation of a proximal oracle may require an iterative algorithm itself, distributed ADMM is usually difficult to implement. A more efficient alternative is the \emph{projection oracle} given by \eqref{oracle: proj}. Such an oracle minimizes the quadratic distance to a reference point \(c_i\). Typical algorithms using projection oracle are distributed projected subgradient methods  \cite{nedic2010constrained,ram2010distributed,xi2016distributed}. We note that the quadratic function in proximal and projection oracles can be further generalized to Bregman divergence of strongly convex function \cite{duchi2012dual,li2016distributed,yuan2018optimal,yu2018bregman,doan2019convergence,yu2019stochastic}.

Another oracle that recently become popular is the \emph{linear minimization oracle} given by \eqref{oracle: lmo}, which, instead of the quadratic function in \eqref{oracle: proj}, optimizes a linear function. First proposed in conditional gradient method (a.k.a Frank--Wolfe method) \cite{frank1956algorithm}, such oracle lately received renewed interest due to its computation efficiency over the convex hulls of an atomic set \cite{clarkson2010coresets,jaggi2013revisiting,bach2012equivalence}. Algorithms that solve problem \eqref{opt: template} using oracle \eqref{oracle: lmo} are commonly known as \emph{distributed conditional gradient method} \cite{lafond2016d,wai2017decentralized,zheng2018distributed,li2018communication}. 

However, the existing distributed conditional gradient methods have the following limitations. The algorithm in \cite{zheng2018distributed} assumes the underlying graph \(\mathcal{G}\) has a master-slave hierarchy, which is sensitive to node failure. The algorithms proposed in \cite{lafond2016d,wai2017decentralized,li2018communication} relaxed this assumption, but each iteration of the resulting algorithm either uses multiple (at least two) communication steps until a consensus condition is reached \cite{lafond2016d,wai2017decentralized}, or multiple linear minimization steps until an optimality condition is reached \cite{li2018communication}.  These observations motivate the following question:

\emph{Is it possible to design a distributed conditional gradient method that uses both communication and linear minimization step only once per iteration?}

In this work, we answer this question affirmatively and make the following contributions. 

\begin{enumerate}
    \item We propose a novel distributed conditional gradient method with \(O(1/\sqrt{k})\) convergence. Compared with existing methods \cite{wai2017decentralized,li2018communication}, our method uses both communication and linear minimization step once per iteration rather than multiple times, and allows approximate linear minimization oracles.
    \item We further extend our results to problems with composite local constraints by combining linear minimization oracle and projection oracle together, which allows more efficient computation than either oracle alone. 
\end{enumerate}

Our work combines ideas from conditional-gradient based augmented Lagrangian methods \cite{yurtsever2018conditional,yurtsever2019conditional} and physics inspired distributed algorithms \cite{yu2018mass,yu2019rlc}. The rest of the paper is organized as follows. After the preliminaries in Section~\ref{sec: preliminaries}, we present our algorithm and its convergence proof in Section~\ref{sec: method} and Section~\ref{sec: convergence}, then further extend them to composite constraint case in Section~\ref{sec: extension}. We then demonstrate our results via numerical examples in Section~\ref{sec: experiment}  before conclude in Section~\ref{sec: conclusion}.

\section{Preliminaries}\label{sec: preliminaries}
Let \(\mathbb{R}\) denote the real numbers, \(\mathbb{R}^n\) the \(n\)-dimensional real numbers. We use \(\cdot^\top\) to denote matrix (and vector) transpose. Let \(\langle x, y\rangle= x^\top y\) and \(\norm{x}_2=\sqrt{\langle x, x\rangle}\) denote the inner product and, respectively, its induced norm. Let \(I_n\) denotes the \(n\times n\) identity matrix and \(\otimes\) denote Kronecker product.

\subsection{Graph theory}
An undirected graph \(\mathcal{G}=(\mathcal{V}, \mathcal{E})\) consists of a node set \(\mathcal{V}\) and an edge set \(\mathcal{E}\), where an edge is a pair of distinct nodes in \(\mathcal{V}\). For an arbitrary orientation on \(\mathcal{G}\), \ie , each edge has a head and a tail, the \(|\mathcal{V}|\times |\mathcal{E}|\) incidence matrix is denoted by \(\overline{E}(\mathcal{G})\). The columns of \(\overline{E}(\mathcal{G})\) are indexed by the edges in \(\mathcal{E}\), and the entry on their \(i\)-th row takes the value ``\(1\)" if node \(i\) is the head of the edge, ``\(-1\)" if it is its tail, and \(0\) otherwise.
When graph \(\mathcal{G}\) is connected, the nullspace of \(\overline{E}(\mathcal{G})^\top\) is spanned by vector of all \(1\)'s.

\subsection{Convex Analysis} 
Let \(X\subseteq \mathbb{R}^n\) denote a closed convex set. 
A continuously differentiable function  \(f: \mathbb{R}^n\to\mathbb{R}\) is convex if and only if, for all \( x, x'\in \mathbb{R}^n,\)
\begin{equation}
    f(x')\geq f(x)+\langle \nabla f(x), x'-x\rangle.
    \label{eqn: convexity}
\end{equation}
We say a convex function \(f\) is \(\beta\)-smooth if \(\frac{\beta}{2}\norm{\cdot}_2^2-f\) is also convex, which implies the following \cite[Thm. 2.1.5]{nesterov2018lectures}
\begin{subequations}\label{eqn: smooth}
\begin{align}
      f(x')\leq & \textstyle f(x)+\langle\nabla f(x), x'-x\rangle+\frac{\beta}{2}\norm{x'-x}_2^2, 
    \label{eqn: smooth 1}\\
     f(x') \geq & \textstyle f(x)+\langle \nabla f(x), x'-x\rangle+\frac{1}{2\beta}\norm{\nabla f(x')-\nabla f(x)}_2^2.\label{eqn: smooth 2}
    \end{align}
\end{subequations}The normal cone \(N_X(x)\) at \(x\in X\) is given by
\begin{equation}\label{def: normal cone}
  N_{X}(x)=\{u|\, \langle u, x'-x\rangle\leq 0, \forall x'\in X\}.
\end{equation}
The projection map \(P_X\) onto set \(X\) is given by
\begin{equation}
 P_X(x)=\textstyle \argmin_{y\in X} \textstyle\frac{1}{2}\norm{x-y}^2_2.
    \label{eqn: projection}
\end{equation}

\section{Algorithm}
\label{sec: method}
We present our main algorithm in this section, which is inspired by RC circuits dynamics. Throughout, we assume graph \(\mathcal{G}=(\mathcal{V}, \mathcal{E})\) is undirected and connected. We also define the following matrices based on graph \(\mathcal{G}\)
\begin{equation}
    E(\mathcal{G})=\overline{E}(\mathcal{G})\otimes I_n,\enskip  L(\mathcal{G})=(\overline{E}(\mathcal{G})\overline{E}(\mathcal{G})^\top)\otimes I_n.
\end{equation}
With these definitions, we can rewrite the optimization template \eqref{opt: template} in the following form
\begin{equation}
    \begin{array}{ll}
        \underset{x}{\mbox{minimize}} & f(x)=\sum_{i\in\mathcal{V}} f_i(x_i) \\
        \mbox{s.t.} & E(\mathcal{G})^\top x=0, \enskip x\in X=\prod_{i=1}^{|\mathcal{V}|}X_i
    \end{array}\label{opt: main}
\end{equation}
where \(x=[x_1^\top, \ldots, x_{|\mathcal{V}|}^\top]^\top\) and \(\prod_{i=1}^{|\mathcal{V}|}X_i\) is the Cartesian product of \(X_1, \ldots, X_{|\mathcal{V}|}\). We assume, for all \(i\in\mathcal{V}\), that \(f_i\) is convex and differentiable, \(X_i\subset\mathbb{R}^n\) is convex and compact.
\begin{figure}[ht]
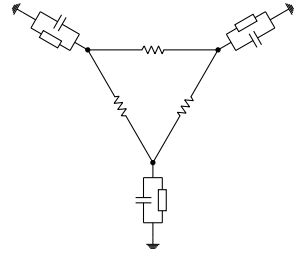

    \centering
    \ctikzset{bipoles/length=0.7cm, bipoles/thickness=1}
\begin{tikzpicture}[scale=0.5]
   
\begin{scope}[rotate=0,transform shape]
    \input{method/RC/subRC.tex}
\end{scope}

\begin{scope}[rotate=120,transform shape]
    \input{method/RC/subRC.tex}
\end{scope}

\begin{scope}[rotate=-120,transform shape]
    \input{method/RC/subRC.tex}
\end{scope}

\end{tikzpicture}
    \caption{An illustration of RC circuits}
    \label{fig: RC}
\end{figure}

Aiming to design a distributed algorithm for problem \eqref{opt: main}, we consider a conceptual RC circuits model defined on graph \(\mathcal{G}=(\mathcal{V}, \mathcal{E})\) as follows. Let each node \(i\in\mathcal{V}\) denote a pin with electrical potential \(x_i(t)\) at time \(t\). We add a linear capacitor with unit capacitance in parallel with a nonlinear resistor which maps potential \(x_i\) to \(\nabla f_i(x_i)\) between each pin \(i\) and ground (zero potential point), then add a linear resistor with time varying resistance \(1/r(t)\) on each edge \(\{ij\}\in\mathcal{E}\) where \(r(t)>0\) and \(\lim_{t\to\infty}1/r(t)=0\). See Fig.~\ref{fig: RC} and Table~\ref{tab:RC} for an illustration.
\begin{table}[ht]
\caption{Voltage-current relation of RC units}
    \label{tab:RC}
    \centering
    \ctikzset{bipoles/length=0.42cm}
\begin{tabular}{|c|c|c|c|}
    \hline
    \small
    type & symbol & voltage & current \\
    \hhline{====}
  non-linear resistor & \esymbol{european resistor} & \(x_i\) & \(\nabla f_i(x_i)\)\\
   linear resistor & \esymbol{resistor} & \(x_i-x_j\) & \(r (x_i-x_j)\)\\
   capacitor & \esymbol{capacitor} & \(x_i\) & \(\frac{d}{dt}x_i\)\\
    \hline
    \end{tabular}
\end{table} 
As the resistance on edges decreases to zero, \ie, \(1/r(t)\to 0\), the potential value on neighboring nodes necessarily reaches the same, \ie, 
\begin{equation}
    x_i(t)-x_j(t)\to 0, \enskip \forall \{ij\}\in\mathcal{E}.\label{eqn: equi KVL}
\end{equation}
Further, if the circuits are reaching an equilibrium where \(\frac{d}{dt}x_i(t)\to 0\) for all \(i\in\mathcal{V}\), then applying Kirchoff current law to the collection of all edges gives
\begin{equation}
    \textstyle\sum_{i\in\mathcal{V}}\nabla f_i(x_i(t))\to 0. \label{eqn: equi KCL}
\end{equation}
Notice that conditions in \eqref{eqn: equi KVL} and \eqref{eqn: equi KCL} are exactly the optimality conditions of \eqref{opt: main} when \(X=\mathbb{R}^{|\mathcal{V}|n}\), which suggests that the dynamics of the constructed RC circuits may provide a prototype algorithm for problem \eqref{opt: main}. Following this intuition, we apply Kirchoff current law to pin \(i\in\mathcal{V}\) for any \(t\geq 0\) (not necessarily at equilibrium) and obtain
\begin{equation}
    \textstyle 0=\frac{d}{dt}x_i(t)+\nabla f_i(x_i(t))+r(t)\sum_{j\in\mathcal{N}(i)}(x_j(t)-x_i(t)),\label{eqn: KCL pin}
\end{equation}
where \(j\in\mathcal{N}(i)\) if and only if \(\{ij\}\in\mathcal{E}\). Let \(x(t)=[x_1(t)^\top, \ldots, x_{|\mathcal{V}|}(t)^\top]^\top\) and \(f(x)=\sum_{i\in\mathcal{V}}f_i(x_i)\), then we can rewrite \eqref{eqn: KCL pin} for all \(i\in\mathcal{V}\) compactly as follows
\begin{equation}
   \textstyle\frac{d}{dt}x(t)=-\nabla f(x(t))-r(t)L(\mathcal{G})x(t),
    \label{eqn: KCL}
\end{equation}
A naive Euler-forward discretization of \eqref{eqn: KCL} says that \(x^{k+1}\) is obtained by moving \(x^k\) in the direction of \(-\nabla f(x^k)-r^kL(\mathcal{G})x^k\). However, it is difficult to choose appropriate step sizes so that \(x^k\in X\) for all \(k\). To remedy this, we propose the following discretization of \eqref{eqn: KCL}
\begin{equation}
    \begin{aligned}
    y^k=&\argmin_{y\in X}\langle \nabla f(x^k)+r^k L(\mathcal{G})x^k, y\rangle,\\
    x^{k+1}=&x^k+\alpha^k(y^k-x^k),
    \end{aligned}\tag{RC}\label{alg: RC}
\end{equation}
where \(\alpha^k\in(0, 1]\). Iteration \eqref{alg: RC} says that \(x^{k+1}\) is obtained by moving \(x^k\) towards \(y^k\), which is the extreme point when moving in the direction \(-\nabla f(x^k)-r^kL(\mathcal{G})\) without leaving set \(X\). Notice that \(x^{k+1}\) is a convex combination of \(x^k\) and \(y^k\) as \(\alpha^k\in (0, 1]\). Since \(X\) is convex, this ensures \(x^{k+1}\in X\) whenever \(x^k\in X\). Hence, algorithm \eqref{alg: RC} ensures \(x^k\in X\) for all \(k\geq 1\) as long as \(x^1\in X\). 

\begin{remark}
Compared with existing methods, each iteration in algorithm~\eqref{alg: RC} uses both communication and linear minimization step only once, which is more efficient than the multiple (at least two) communication steps  in \cite{wai2017decentralized} and multiple linear minimization steps in \cite{li2018communication}.
\end{remark}

In the next section, we will show that algorithm \eqref{alg: RC} indeed converges to the optimum of \eqref{opt: main}. The key challenge is to find an appropriate sequence of step sizes \(\{\alpha^k\}\) and determine how fast the sequence \(\{r^k\}\) grows to infinity.

\section{Convergence}
\label{sec: convergence}
In this section, we establish the convergence of algorithm \eqref{alg: RC} proposed in the previous section, and further extend it to cases with approximate linear minimization. We first group our technical assumptions as follows.
\begin{assumption}
\begin{enumerate}
    \item Graph \(\mathcal{G}=(\mathcal{V}, \mathcal{E})\) is undirected and connected.
    \item  For all \(i\in\mathcal{V}\), \(f_i: \mathbb{R}^n\to \mathbb{R}\) is continuously differentiable, convex and \(\beta\)-smooth, \ie, both \(f_i\) and \(\frac{\beta}{2}\norm{\cdot}_2^2-f_i\) are convex, \(X_i\subseteq\mathbb{R}^n\) is a compact convex set. We assume \(\max_{x, x'\in X}\norm{x-x'}^2_2\leq \delta\) for some \(\delta>0\) where \(X=\prod_{i=1}^{\mathcal{V}}X_i\).
\end{enumerate}\label{asp: basic}
\end{assumption}

\begin{assumption}\label{asp: KKT}
 There exists \(x^\star\in X\) and \(u^\star\) such that
\begin{subequations}
    \begin{align}
    E(\mathcal{G})^\top x^\star=&0,\label{kkt: primal}\\
    -E(\mathcal{G})u^\star-\nabla f(x^\star)\in & N_{X}(x^\star).\label{kkt: dual}
    \end{align}
\end{subequations}
\end{assumption}
Based on these assumptions, the following theorem shows the convergence of \eqref{alg: RC} in terms of both the objective function value and consensus error, where we let \(\norm{L(\mathcal{G})}_2\) to denote the largest eigenvalue of \(L(\mathcal{G})\) (all proofs are delayed to the Appendix).
\begin{theorem}\label{thm: 1}
Suppose Assumption~\ref{asp: basic} and \ref{asp: KKT} hold. If sequence \(\{x^k\}\) is generated by \eqref{alg: RC} with \(\alpha^k=\frac{2}{k+1}\), \(r^k=r^0\sqrt{k+1}\) for some \(r^0>0\), then
\begin{equation*}
\begin{aligned}
    |f(x^k)-f(x^\star)|\leq & \textstyle\frac{2\max\{\sigma^k \delta^2, \rho^2+\rho\sqrt{\sigma^k\delta}\}}{\sqrt{k}},\\
    \norm{E(\mathcal{G})^\top x^k}_2\leq & \textstyle\frac{2(\rho+\sqrt{\sigma^k\delta})}{ \sqrt{k}},
    \end{aligned}
\end{equation*}
where \(\rho=\norm{u^\star}_2\) and \(\sigma^k=\frac{\beta}{\sqrt{k}}+\norm{L(\mathcal{G})}_2r^0\).
\end{theorem}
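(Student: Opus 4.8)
The plan is to recognize iteration \eqref{alg: RC} as a conditional gradient (Frank--Wolfe) step applied to the time-varying penalized objective
\[
F_k(x) = f(x) + \tfrac{r^k}{2}\norm{E(\mathcal{G})^\top x}_2^2,
\]
since \(L(\mathcal{G}) = E(\mathcal{G})E(\mathcal{G})^\top\) gives \(\norm{E(\mathcal{G})^\top x}_2^2 = x^\top L(\mathcal{G})x\) and hence \(\nabla F_k(x^k) = \nabla f(x^k) + r^k L(\mathcal{G})x^k\), which is exactly the direction minimized to produce \(y^k\). The function \(F_k\) is convex and \(\beta_k\)-smooth with \(\beta_k = \beta + r^k\norm{L(\mathcal{G})}_2\). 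First I would apply the smoothness inequality \eqref{eqn: smooth 1} to \(x^{k+1}=x^k+\alpha^k(y^k-x^k)\), then use the optimality of \(y^k\) for the linear minimization together with convexity of \(F_k\) and feasibility of \(x^\star\) (note \(E(\mathcal{G})^\top x^\star = 0\), so \(F_k(x^\star) = f(x^\star)\)). Writing \(h^k := F_k(x^k) - f(x^\star)\) and using \(\norm{y^k - x^k}_2^2 \le \delta\), this yields the one-step estimate
\[
F_k(x^{k+1}) - f(x^\star) \le (1-\alpha^k)h^k + \tfrac{(\alpha^k)^2\beta_k}{2}\delta.
\]

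The key difficulty, and the main obstacle, is that the penalty weight \(r^k\) increases with \(k\), so converting the left-hand side into \(h^{k+1}\) introduces the extra term \(\tfrac{r^{k+1}-r^k}{2}\norm{E(\mathcal{G})^\top x^{k+1}}_2^2\). Bounding this factor crudely by the diameter (\ie, \(\norm{E(\mathcal{G})^\top x^{k+1}}_2^2 \le \norm{L(\mathcal{G})}_2\delta\)) is too lossy: after weighting, the sum it produces grows fast enough to destroy the \(O(1/\sqrt{k})\) rate. To control it I would instead derive a self-referential feasibility bound from Assumption~\ref{asp: KKT}: the dual condition \eqref{kkt: dual} means \(x^\star\) minimizes \(f(\cdot) + \langle u^\star, E(\mathcal{G})^\top\cdot\rangle\) over \(X\), which gives the saddle-point lower bound
\[
f(x) - f(x^\star) \ge -\langle u^\star, E(\mathcal{G})^\top x\rangle \ge -\rho\,\norm{E(\mathcal{G})^\top x}_2, \quad \forall x\in X.
\]
Substituting \(x = x^{k+1}\) into the definition of \(h^{k+1}\) turns it into a quadratic lower bound in \(\norm{E(\mathcal{G})^\top x^{k+1}}_2\); inverting it with a Young inequality bounds \(\norm{E(\mathcal{G})^\top x^{k+1}}_2^2\) by \(O(h^{k+1}/r^{k+1}) + O(\rho^2/(r^{k+1})^2)\). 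Because \(\tfrac{r^{k+1}-r^k}{r^{k+1}} = O(1/k)\) for \(r^k = r^0\sqrt{k+1}\), the resulting \(h^{k+1}\)-proportional term can be absorbed into the left-hand side, leaving a clean recursion of the form \(h^{k+1} \le \tfrac{k-1}{k}h^k + O(k^{-3/2})\).

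I would then solve this recursion by multiplying through by the weights \(w_k = k\) (so that \(w_k\cdot\tfrac{k-1}{k} = w_{k-1}\) telescopes) and summing; the vanishing boundary weight \(w_0 = 0\) removes the dependence on \(x^1\), and the residual sum \(\sum_k w_k\cdot O(k^{-3/2}) = O(\sqrt{k})\) divided by \(w_{k-1}\approx k\) gives \(h^k = O(\sigma^k/\sqrt{k})\), the constants from \(\beta_k\) and the \(\rho^2\) remainder assembling into the \(\sigma^k\) and \(\rho\) terms stated in the theorem. Finally I would extract the three claimed bounds: the consensus bound follows by solving the quadratic \(\tfrac{r^k}{2}\norm{E(\mathcal{G})^\top x^k}_2^2 - \rho\norm{E(\mathcal{G})^\top x^k}_2 \le h^k\) for \(\norm{E(\mathcal{G})^\top x^k}_2\); the objective upper bound follows from \(f(x^k)-f(x^\star) \le h^k\) (discarding the nonnegative penalty term); and the objective lower bound follows from the same saddle-point inequality \(f(x^k)-f(x^\star) \ge -\rho\norm{E(\mathcal{G})^\top x^k}_2\) combined with the consensus bound, giving the two-sided estimate on \(|f(x^k)-f(x^\star)|\). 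The bookkeeping to keep every error term at \(O(1/\sqrt{k})\) and to match the exact constants in \(\sigma^k\) is where most of the care is needed.
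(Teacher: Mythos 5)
Your overall architecture matches the paper's: both use the penalized gap $F_k(x^k)-f(x^\star)$ as the Lyapunov quantity (the paper's $V^k$), both get the one-step estimate from smoothness of the penalized objective plus the linear-minimization optimality, and both extract the final two bounds by combining the decay of that quantity with the saddle-point inequality $f(x^k)-f(x^\star)\ge -\rho\norm{E(\mathcal{G})^\top x^k}_2$ from Assumption~\ref{asp: KKT} and solving the resulting quadratic in $\norm{E(\mathcal{G})^\top x^k}_2$. Where you genuinely diverge is the central difficulty you correctly identify: the term $\tfrac{r^{k+1}-r^k}{2}\norm{E(\mathcal{G})^\top x^{k+1}}_2^2$ created by the growing penalty. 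You control it by re-invoking the dual bound to get $\norm{E(\mathcal{G})^\top x^{k+1}}_2^2 \le O(h^{k+1}/r^{k+1})+O(\rho^2/(r^{k+1})^2)$ and absorbing the $h^{k+1}$ part into the left-hand side. The paper avoids this entirely: instead of plain convexity of $F_k$, it applies the sharper inequality \eqref{eqn: smooth 2} to the $1$-smooth distance function and the exact identity $\langle L(\mathcal{G})x^k, x^\star-x^k\rangle=-\norm{E(\mathcal{G})^\top x^k}_2^2$, which yields $\langle \nabla h(x^k),x^\star-x^k\rangle\le -2h(x^k)$ --- a factor of $2$ rather than $1$ on the penalty decrease. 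That extra factor makes the net coefficient of $h(x^k)$ in the recursion, $(1-\alpha^k)(r^k-r^{k-1})-\alpha^k r^k$, nonpositive for $\alpha^k=\tfrac{2}{k+1}$, $r^k=r^0\sqrt{k+1}$, so the troublesome term is simply discarded and the recursion $V^{k+1}\le(1-\alpha^k)V^k+(\alpha^k)^2\Delta^k$ follows with no appeal to the KKT multipliers inside the lemma. Your route buys generality (it would work even if the penalty gradient gave only the convexity bound, and it is the standard Frank--Wolfe-with-augmentation argument), but it pays a price in constants: the absorbed remainder injects an extra additive $O(\rho^2/(r^0\sqrt{k}))$ into your $h^k$ bound, so while you recover the $O(1/\sqrt{k})$ rate, you will not reproduce the exact constants $\sigma^k$ and $\rho^2+\rho\sqrt{\sigma^k\delta}$ stated in the theorem --- a caveat you flag yourself, and the one place where the paper's factor-of-two trick is doing real work.
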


Each iteration of \eqref{alg: RC} requires an exact linear minimization. However, it is important to note that conditional gradient method itself is known to be robust to approximate linear minimization as well \cite{jaggi2013revisiting}. If we let \(y^\epsilon\) be an \emph{\(\epsilon\)-optimal solution} to \(\min_{y\in X}\langle c, y\rangle\) in the following sense 
\begin{equation}
    \langle c, y^\epsilon\rangle -\min_{y\in X}\langle c, y\rangle\leq\epsilon, \enskip y^\epsilon\in X.\label{eqn: epsilon optimal}
\end{equation}
Then the following corollary shows that if the linear minimization in algorithm \eqref{alg: RC} is solved approximatedly in the sense of \eqref{eqn: epsilon optimal} with increasing accuracy, then convergence results similar to those in Theorem~\ref{thm: 1} still hold.

\begin{corollary}\label{cor: 1}
If the \(y^k\) used in \eqref{alg: RC} is replaced by an \(\epsilon^k\)-optimal solution to the corresponding linear minimization in the sense of \eqref{eqn: epsilon optimal}, then
Theorem~\ref{thm: 1} still holds with \(\sigma^k=(1+\kappa)(\frac{\beta}{\sqrt{k}}+\norm{L(\mathcal{G})}_2r^0)\) for some \(\kappa>0\) if 
\[\textstyle\epsilon^k\leq \kappa\big(\frac{\beta}{\sqrt{k+1}}+\norm{L(\mathcal{G}}_2r^0\big)\frac{\delta}{\sqrt{k+1}}.\] 
\end{corollary}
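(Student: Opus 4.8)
The plan is to revisit the proof of Theorem~\ref{thm: 1} and isolate the single place where exactness of the linear minimization is invoked, then absorb the resulting slack into the constant $\sigma^k$. Observe first that the search direction in \eqref{alg: RC} is exactly $c^k=\nabla f(x^k)+r^kL(\mathcal{G})x^k=\nabla\Phi^k(x^k)$, the gradient of the penalized objective $\Phi^k(x)=f(x)+\frac{r^k}{2}\norm{E(\mathcal{G})^\top x}_2^2$, which is $(\beta+r^k\norm{L(\mathcal{G})}_2)$-smooth by Assumption~\ref{asp: basic}. Consequently the convergence argument proceeds through a Frank--Wolfe style descent inequality in which the optimality of $y^k$ enters only via the comparison $\langle c^k, y^k-x^k\rangle\le\langle c^k, y-x^k\rangle$ against a feasible reference point (such as $x^\star$). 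This is the one and only inequality affected by inexactness.

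Next I would replace that exact inequality by its $\epsilon^k$-optimal counterpart. By \eqref{eqn: epsilon optimal}, an $\epsilon^k$-optimal $y^k$ satisfies $\langle c^k, y^k\rangle\le\min_{y\in X}\langle c^k, y\rangle+\epsilon^k$, hence $\langle c^k, y^k-x^k\rangle\le\langle c^k, y-x^k\rangle+\epsilon^k$ for every $y\in X$. Carrying this extra $+\epsilon^k$ through the descent step $x^{k+1}=x^k+\alpha^k(y^k-x^k)$ adds precisely $\alpha^k\epsilon^k$ to the right-hand side of the recursion that drives Theorem~\ref{thm: 1}, alongside the smoothness remainder $\frac{(\alpha^k)^2}{2}(\beta+r^k\norm{L(\mathcal{G})}_2)\norm{y^k-x^k}_2^2$.

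The decisive step is to show the prescribed bound on $\epsilon^k$ is chosen exactly so that this new term is a $\kappa$-multiple of the smoothness remainder. With $\alpha^k=\frac{2}{k+1}$, $r^k=r^0\sqrt{k+1}$, and $\norm{y^k-x^k}_2^2\le\delta$, the smoothness remainder is at most $\frac{2\delta}{(k+1)^{3/2}}\big(\frac{\beta}{\sqrt{k+1}}+\norm{L(\mathcal{G})}_2 r^0\big)$, while the hypothesis $\epsilon^k\le\kappa\big(\frac{\beta}{\sqrt{k+1}}+\norm{L(\mathcal{G})}_2 r^0\big)\frac{\delta}{\sqrt{k+1}}$ gives $\alpha^k\epsilon^k\le\frac{2\kappa\delta}{(k+1)^{3/2}}\big(\frac{\beta}{\sqrt{k+1}}+\norm{L(\mathcal{G})}_2 r^0\big)$. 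Adding the two makes the aggregate per-iteration error $(1+\kappa)$ times the original, so the entire recursion is unchanged except that every occurrence of $\sigma^k$ is scaled to $(1+\kappa)\sigma^k$; the remainder of the proof of Theorem~\ref{thm: 1}, including the summation and the final $O(1/\sqrt{k})$ estimates on both the objective gap and $\norm{E(\mathcal{G})^\top x^k}_2$, then transfers verbatim.

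The part I expect to require the most care is the bookkeeping rather than any new idea: one must track the index shifts ($k$ versus $k+1$ in $\sigma^k$ and in the $\epsilon^k$ bound) and confirm that inexactness genuinely enters the recursion additively and linearly, so that scaling the per-step error by $(1+\kappa)$ propagates cleanly to a $(1+\kappa)$ scaling of $\sigma^k$ in the closed-form bounds without disturbing the consensus-error analysis.
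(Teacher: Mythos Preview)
Your proposal is correct and matches the paper's own argument essentially line for line: the paper likewise pinpoints the single inequality $\langle \nabla f(x^k)+r^k\nabla h(x^k), y^k\rangle\le\langle \nabla f(x^k)+r^k\nabla h(x^k), x^\star\rangle$ (equation~\eqref{eqn: lem1 eqn1.1} in the proof of Lemma~\ref{lem: 1}), adds $\epsilon^k$ to its right-hand side, observes that the assumed bound on $\epsilon^k$ makes the extra $\alpha^k\epsilon^k$ exactly $\kappa$ times the existing $(\alpha^k)^2\Delta^k$ remainder, and then lets the rest of the proof run unchanged with $\sigma^k$ replaced by $(1+\kappa)\sigma^k$. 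Your anticipation that the only delicate point is the $k$ versus $k{+}1$ bookkeeping is also accurate.
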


\section{Distributed Optimization with Composite Local Constraints}\label{sec: extension}
One limitation of existing distributed conditional gradient methods \cite{lafond2016d,wai2017decentralized,zheng2018distributed,li2018communication} is that their iterates require linear minimization over the entire local constraint set, which can be computationally challenging. It also completely discard projection oracles, which may lead to efficient computation in many interesting scenarios \cite{nedic2010constrained,ram2010distributed,xi2016distributed}. Motivated by these observations, we consider the following extension to problem \eqref{opt: main} with \emph{composite constraints} on each node  
\begin{equation}
    \begin{array}{ll}
        \underset{x}{\mbox{minimize}} & f(x)=\sum_{i\in\mathcal{V}} f_i(x_i) \\
        \mbox{s.t.} & E(\mathcal{G})^\top x=0, \\
        &x\in X\cap Y=(\prod_{i=1}^{|\mathcal{V}|}X_i)\cap (\prod_{i=1}^{|\mathcal{V}|}Y_i)
    \end{array}\label{opt: composite}
\end{equation}
where, in addition to the assumptions we made for \eqref{opt: main}, we assume \(Y_i\subseteq\mathbb{R}^n\) is a close convex set; we also assume that the projection oracle \eqref{oracle: proj} is efficient on \(Y_i\), \ie, \(P_{Y_i}(x_i)\) is easy to compute for all \(i\in\mathcal{V}\). 
 
To exploit the structure of \eqref{opt: composite}, we propose the following modification to \eqref{alg: RC}, whose linear minimization contains a penalty term for not only the consensus constraints violation \(L(\mathcal{G})x^k\) but also the difference between \(x^k\) and \(P_Y(x^k)\). 
\begin{equation}
    \begin{aligned}
    y^k=&\argmin_{y\in X}\langle \nabla f(x^k)+r^k (x^k-P_Y(x^k)+L(\mathcal{G})x^k), y\rangle,\\
    x^{k+1}=&x^k+\alpha^k(y^k-x^k).
    \end{aligned}\tag{RC-co}\label{alg: RC composite}
\end{equation}
Since \(Y=\prod_{i=1}^{|\mathcal{V}|} Y_i\), it is straightforward to show that
\begin{equation}\label{eqn: projection decomp}
P_Y(x)=[P_{Y_1}(x_1)^\top, \ldots, P_{Y_{|\mathcal{V}|}}(x_{|\mathcal{V}|})^\top]^\top,
\end{equation}
for any \(x=[x_i^\top, \ldots, x_{|\mathcal{V}|}^\top]^\top\).
Hence \eqref{alg: RC composite} also allows fully distributed implementation.

We now prove that, with proper modifications to Assumption~\ref{asp: KKT}, the results similar to those in Section~\ref{sec: convergence} still hold (all proofs are delayed to the Appendix)..
\begin{assumption}\label{asp: KKT composite} For all \(i\in\mathcal{V}\), \(Y_i\subseteq \mathbb{R}^n\) is a compact convex set. There exists \(x^\star\in X\), \(u^\star\) and \(v^\star\in N_Y(x^\star)\) such that
\begin{subequations}
    \begin{align}
    E(\mathcal{G})^\top x^\star=&0,\label{kkt composite: primal}\\
    -E(\mathcal{G})u^\star-\nabla f(x^\star)-v^\star\in & N_{X}(x^\star).\label{kkt composite: dual}
    \end{align}
\end{subequations}
\end{assumption}

\begin{theorem}\label{thm: 2}
Suppose Assumption~\ref{asp: basic} and \ref{asp: KKT composite} hold. If sequence \(\{x^k\}\) is generated by \eqref{alg: RC composite} with \(\alpha^k=\frac{2}{k+1}\), \(r^k=r^0\sqrt{k+1}\) for some \(r^0>0\), then
\begin{equation*}
\begin{aligned}
    |f(x^k)-f(x^\star)|\leq & \textstyle\frac{2\max\{\sigma^k\delta^2, \rho^2+\rho\sqrt{\sigma^k\delta}\}}{\sqrt{k}},\\
    \norm{\begin{bmatrix}
E(\mathcal{G})^\top x^k\\
x^k-P_Y(x^k)
\end{bmatrix}}_2\leq &\textstyle \frac{2(\rho+\sqrt{\sigma^k\delta})}{\sqrt{k}},
    \end{aligned}
\end{equation*}
where \(\rho = \norm{\begin{bmatrix}
u^\star\\
v^\star
\end{bmatrix}}_2\) and \(\sigma^k=\frac{\beta}{\sqrt{k}}+(\norm{L(\mathcal{G})}_2+1)r^0\).
\end{theorem}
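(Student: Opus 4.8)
The plan is to recognize \eqref{alg: RC composite} as a conditional gradient (Frank--Wolfe) step applied to the time-varying penalized objective
\[
g_k(x)=f(x)+\tfrac{r^k}{2}\|E(\mathcal{G})^\top x\|_2^2+\tfrac{r^k}{2}\|x-P_Y(x)\|_2^2 .
\]
The two new facts I would establish first are that $\tfrac12\|x-P_Y(x)\|_2^2$ is convex and differentiable with gradient $x-P_Y(x)$ (a nonexpansive, hence $1$-Lipschitz, map because $P_Y$ is firmly nonexpansive), and that $\tfrac12\|E(\mathcal{G})^\top x\|_2^2$ has gradient $L(\mathcal{G})x$ since $L(\mathcal{G})=E(\mathcal{G})E(\mathcal{G})^\top$. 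Consequently $\nabla g_k(x)=\nabla f(x)+r^k\big(x-P_Y(x)+L(\mathcal{G})x\big)$ is exactly the vector minimized in \eqref{alg: RC composite}, $g_k$ is convex, and it is $L_k$-smooth with $L_k=\beta+r^k(\|L(\mathcal{G})\|_2+1)$ --- the extra $+1$ being the source of the $\|L(\mathcal{G})\|_2+1$ appearing in $\sigma^k$. Finally, since $x^\star$ satisfies $E(\mathcal{G})^\top x^\star=0$ and $x^\star\in Y$ (so $P_Y(x^\star)=x^\star$), both penalties vanish at $x^\star$ and $g_k(x^\star)=f(x^\star)$.

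Next I would prove the \emph{lower} bound on the objective gap from Assumption~\ref{asp: KKT composite}. Writing $w^k:=\big\|[\,(E(\mathcal{G})^\top x^k)^\top,\ (x^k-P_Y(x^k))^\top\,]^\top\big\|_2$, convexity of $f$ gives $f(x^k)-f(x^\star)\ge\langle\nabla f(x^\star),x^k-x^\star\rangle$; testing the variational inequality \eqref{kkt composite: dual} at $x^k\in X$ replaces $\nabla f(x^\star)$ by $-E(\mathcal{G})u^\star-v^\star$ up to a nonnegative term; using $E(\mathcal{G})^\top x^\star=0$ turns the $u^\star$-term into $\langle u^\star,E(\mathcal{G})^\top x^k\rangle$; and splitting $x^k-x^\star=(x^k-P_Y(x^k))+(P_Y(x^k)-x^\star)$ together with $v^\star\in N_Y(x^\star)$ and $P_Y(x^k)\in Y$ (which makes the second piece contribute a favorable sign) leaves $f(x^k)-f(x^\star)\ge-\langle u^\star,E(\mathcal{G})^\top x^k\rangle-\langle v^\star,x^k-P_Y(x^k)\rangle\ge-\rho\,w^k$ by Cauchy--Schwarz on the stacked vectors, where $\rho=\big\|[u^{\star\top},v^{\star\top}]^\top\big\|_2$.

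For the main recursion I would apply the descent lemma \eqref{eqn: smooth 1} to the $L_k$-smooth $g_k$ along $x^{k+1}=x^k+\alpha^k(y^k-x^k)$, then use optimality of $y^k$ and convexity of $g_k$ to bound $\langle\nabla g_k(x^k),y^k-x^k\rangle\le g_k(x^\star)-g_k(x^k)=f(x^\star)-g_k(x^k)$, yielding, with $e_k:=g_k(x^k)-f(x^\star)$ and $\|y^k-x^k\|_2^2\le\delta$,
\[
g_k(x^{k+1})-f(x^\star)\le(1-\alpha^k)e_k+\tfrac{(\alpha^k)^2}{2}L_k\delta .
\]
Passing from $g_k(x^{k+1})$ to $e_{k+1}=g_{k+1}(x^{k+1})-f(x^\star)$ introduces the penalty-growth term $\tfrac{r^{k+1}-r^k}{2}(w^{k+1})^2$. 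With $\alpha^k=\tfrac{2}{k+1}$ and $r^k=r^0\sqrt{k+1}$ I would solve the resulting recursion by multiplying through by $k(k+1)$ and telescoping, so that $L_k\sim r^0(\|L(\mathcal{G})\|_2+1)\sqrt{k}$ summed against the $1/k$ step size yields the claimed $e_k=O(\sigma^k\delta^2/\sqrt k)$. Combining then closes the proof: $f(x^k)-f(x^\star)\le e_k$ because the penalties are nonnegative; and substituting the lower bound into $e_k=f(x^k)-f(x^\star)+\tfrac{r^k}{2}(w^k)^2$ gives the quadratic inequality $\tfrac{r^k}{2}(w^k)^2-\rho w^k-e_k\le0$, whose positive root produces the feasibility bound $w^k\le\tfrac{2(\rho+\sqrt{\sigma^k\delta})}{\sqrt k}$; the two-sided estimate $|f(x^k)-f(x^\star)|\le\max\{e_k,\rho w^k\}$ then yields the stated $\max$.

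I expect the \textbf{main obstacle} to be controlling the penalty-growth term $\tfrac{r^{k+1}-r^k}{2}(w^{k+1})^2$: the naive bound $(w^{k+1})^2\le(\|L(\mathcal{G})\|_2+1)\delta$ is too crude and, weighted by $k(k+1)$, sums to a divergent quantity. The remedy is to use the \emph{lower} bound self-referentially, $\tfrac{r^{k+1}}{2}(w^{k+1})^2\le e_{k+1}+\rho w^{k+1}$, so that $\tfrac{r^{k+1}-r^k}{2}(w^{k+1})^2\le\tfrac{r^{k+1}-r^k}{r^{k+1}}(e_{k+1}+\rho w^{k+1})$ with $\tfrac{r^{k+1}-r^k}{r^{k+1}}=O(1/k)$; absorbing the $e_{k+1}$ piece into the left-hand side keeps the recursion contractive, at the price of a simultaneous induction that couples $e_k$ and $w^k$. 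Lining up the precise constants (the factor $2$, the power of $\delta$, and the exact $\sigma^k$) is then careful but routine bookkeeping, and the whole argument mirrors the proof behind Theorem~\ref{thm: 1}, the composite case differing only through the added $Y$-distance penalty and the extra dual variable $v^\star$.
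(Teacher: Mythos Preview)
Your overall architecture---view \eqref{alg: RC composite} as Frank--Wolfe on the penalized objective, combine the smoothness descent with the KKT-based lower bound $f(x^k)-f(x^\star)\ge-\rho w^k$, and solve the resulting quadratic in $w^k$---matches the paper exactly, as does the derivation of the lower bound from \eqref{kkt composite: dual} and $v^\star\in N_Y(x^\star)$. The difference lies in how the penalty-growth term is handled, and here the paper avoids your ``main obstacle'' entirely with two coupled tricks you are missing.

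First, the paper uses the \emph{lagged} Lyapunov function $V^k=f(x^k)+r^{k-1}h(x^k)-f(x^\star)$ with $h(x)=\tfrac12\|E(\mathcal{G})^\top x\|_2^2+\tfrac12\|x-P_Y(x)\|_2^2$, so that the growth term $(r^k-r^{k-1})h(x^k)$ arises at the \emph{current} iterate rather than at $x^{k+1}$. Second, and this is the decisive point, instead of bounding $\langle\nabla g_k(x^k),x^\star-x^k\rangle$ by mere convexity of $g_k$, the paper splits: convexity is applied only to $f$, while for the penalty $h$ it invokes the cocoercivity inequality \eqref{eqn: smooth 2} at the point $x^\star$ where $\nabla h(x^\star)=0$, giving
\[
\langle\nabla h(x^k),x^\star-x^k\rangle\le -2h(x^k),
\]
a factor of two stronger than convexity alone would give. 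With that extra factor, the coefficient of $h(x^k)$ in the one-step recursion becomes $(1-\alpha^k)(r^k-r^{k-1})-\alpha^k r^k$, which for $\alpha^k=\tfrac{2}{k+1}$ and $r^k=r^0\sqrt{k+1}$ is strictly negative, so the entire term is simply dropped. The recursion then reduces to the clean $V^{k+1}\le(1-\alpha^k)V^k+(\alpha^k)^2\Delta^k$ and telescopes directly---no self-referential bound, no coupled induction on $e_k$ and $w^k$. Your proposed workaround may be salvageable, but it is solving a difficulty that the right inequality removes at the source, and it is unlikely to deliver the stated constants without further slack.
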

\begin{remark}\label{rem: equivalence}
Algorithm~\eqref{alg: RC composite} can also be interpreted as a combination of Nestrov smoothing of indicator function and conditional gradient method \cite{yurtsever2018conditional}.  
\end{remark}

\begin{corollary}\label{cor: 2}
If the \(y^k\) used in \eqref{alg: RC composite} is replaced by an \(\epsilon^k\)-optimal solution to to the corresponding linear optimization in the sense of \eqref{eqn: epsilon optimal}, then
Theorem~\ref{thm: 2} still holds with \(\sigma^k=(1+\kappa)(\frac{\beta}{\sqrt{k}}+(\norm{L(\mathcal{G})}_2+1)r^0)\) for some \(\kappa>0\) if 
\[\textstyle\epsilon^k\leq \kappa\big(\frac{\beta}{\sqrt{k+1}}+(\norm{L(\mathcal{G}}_2+1)r^0\big)\frac{\delta}{\sqrt{k+1}}.\] 
\end{corollary}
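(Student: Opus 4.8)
The plan is to establish Corollary~\ref{cor: 2} in exactly the way Corollary~\ref{cor: 1} relates to Theorem~\ref{thm: 1}: the approximate linear minimization perturbs the convergence analysis of Theorem~\ref{thm: 2} only through a single, controllable error term per iteration. First I would make explicit the smoothing viewpoint flagged in Remark~\ref{rem: equivalence}. The direction $c^k=\nabla f(x^k)+r^k(x^k-P_Y(x^k)+L(\mathcal{G})x^k)$ driving \eqref{alg: RC composite} is precisely the gradient at $x^k$ of the smoothed objective $F_k(x)=f(x)+\frac{r^k}{2}\norm{E(\mathcal{G})^\top x}_2^2+\frac{r^k}{2}\norm{x-P_Y(x)}_2^2$, which is convex and $\beta_k$-smooth with $\beta_k=\beta+r^k(\norm{L(\mathcal{G})}_2+1)$: the three summands contribute $\beta$, $r^k\norm{L(\mathcal{G})}_2$, and $r^k$ to the smoothness, the last because the squared distance to a convex set is $1$-smooth with gradient $x-P_Y(x)$. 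Thus \eqref{alg: RC composite} is a conditional gradient step on $F_k$ over $X$, which is the structure underlying Theorem~\ref{thm: 2}.

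Next I would isolate the one place in that argument where exact minimization is used. Applying the smoothness inequality \eqref{eqn: smooth 1} to $F_k$ along $x^{k+1}=x^k+\alpha^k(y^k-x^k)$ and bounding $\norm{y^k-x^k}_2^2\leq\delta$ yields a per-iteration descent of the form $F_k(x^{k+1})\leq F_k(x^k)+\alpha^k\langle c^k,y^k-x^k\rangle+\frac{\beta_k(\alpha^k)^2\delta}{2}$. In Theorem~\ref{thm: 2} the exact minimizer gives $\langle c^k,y^k\rangle\leq\langle c^k,x^\star\rangle$, hence $\langle c^k,y^k-x^k\rangle\leq\langle c^k,x^\star-x^k\rangle$; together with convexity of $F_k$ and $F_k(x^\star)=f(x^\star)$ (valid since $x^\star$ is consensus-feasible and $x^\star\in Y$ by Assumption~\ref{asp: KKT composite}, so $P_Y(x^\star)=x^\star$), this feeds the telescoping and the primal-dual terms built from $u^\star,v^\star$. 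With an $\epsilon^k$-optimal $y^k$, the defining inequality \eqref{eqn: epsilon optimal} only weakens this to $\langle c^k,y^k-x^k\rangle\leq\langle c^k,x^\star-x^k\rangle+\epsilon^k$, so the sole change is an additive $\alpha^k\epsilon^k$ in the per-iteration bound; since \eqref{eqn: epsilon optimal} still guarantees $y^k\in X$, the invariant $x^{k+1}\in X$ is preserved.

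The crux is a direct calculation showing that the stated bound on $\epsilon^k$ makes this extra term absorbable into the smoothness error. With $\alpha^k=\frac{2}{k+1}$ and $\beta_k=\beta+r^0\sqrt{k+1}(\norm{L(\mathcal{G})}_2+1)$, the hypothesis rewrites as $\epsilon^k\leq\kappa\frac{\beta_k}{\sqrt{k+1}}\cdot\frac{\delta}{\sqrt{k+1}}=\kappa\frac{\beta_k\delta}{k+1}$, whence $\alpha^k\epsilon^k\leq\kappa\frac{2\beta_k\delta}{(k+1)^2}=\kappa\frac{\beta_k(\alpha^k)^2\delta}{2}$. Adding this to the smoothness error gives $\frac{\beta_k(\alpha^k)^2\delta}{2}+\alpha^k\epsilon^k\leq(1+\kappa)\frac{\beta_k(\alpha^k)^2\delta}{2}$, so the approximate scheme satisfies the same recursion as the exact one with $\beta_k$ replaced by $(1+\kappa)\beta_k$.

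Finally I would replay the remainder of the Theorem~\ref{thm: 2} argument verbatim with this inflated constant. Since $\sigma^k$ is linear in the effective smoothness $\beta_k$ (both the $\beta/\sqrt{k}$ and the $(\norm{L(\mathcal{G})}_2+1)r^0$ terms scale together), the substitution $\beta_k\mapsto(1+\kappa)\beta_k$ is identical to $\sigma^k\mapsto(1+\kappa)\sigma^k$, reproducing the claimed bounds on $|f(x^k)-f(x^\star)|$ and on the stacked residual of $E(\mathcal{G})^\top x^k$ and $x^k-P_Y(x^k)$. I expect the only genuine work, beyond this bookkeeping, to be confirming that $\epsilon^k$ enters the proof of Theorem~\ref{thm: 2} at no point other than this single inequality — in particular that the treatment of the growing penalty coefficient $r^k$ is untouched — so that the absorption above is truly sufficient to close the argument.
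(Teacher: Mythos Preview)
Your proposal is correct and follows essentially the same approach as the paper: the paper's proof of Corollary~\ref{cor: 2} simply notes that the approximate oracle replaces inequality \eqref{eqn: lem1 eqn1.1} in Lemma~\ref{lem: 1} by its \(\epsilon^k\)-perturbed version and that the rest of the proofs of Lemma~\ref{lem: 1} and Theorem~\ref{thm: 2} go through unchanged. Your explicit verification that \(\alpha^k\epsilon^k\leq \kappa(\alpha^k)^2\Delta^k\) (so the error is absorbed by inflating \(\Delta^k\), and hence \(\sigma^k\), by the factor \(1+\kappa\)) is exactly the bookkeeping the paper leaves implicit.
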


\section{Numerical Examples} 
\label{sec: experiment}
Distributed matrix completion aims to predict missing entries of a low rank target matrix using corrupted partial measurements distributed over a network \(\mathcal{G}=(\mathcal{V}, \mathcal{E})\) \cite{ling2012decentralized,wai2017decentralized}. Here we consider a template of the following form,
\begin{equation}\label{opt: matrix completion}
    \begin{array}{ll}
    \underset{x_1, \ldots, x_{|\mathcal{V}|}\in\mathbb{R}^{n\times n}}{\mbox{minimize}} & \sum_{i\in\mathcal{V}} \norm{o_i\odot (x_i-d)}_F^2\\
        \mbox{s.t.} & x_i=x_j, \enskip \forall\{ij\}\in\mathcal{E},\\
        & \norm{x_i}_*\leq \theta,\enskip  x_i=x_i^\top,\enskip \forall i\in\mathcal{V},\\
        &l_i\leq x_i\leq u_i,\enskip \forall i\in\mathcal{V}.
    \end{array}
\end{equation}
where \(\odot, \norm{\cdot}_F, \norm{\cdot}_*\) denote the Hadamard (entry-wise) product, Frobenius norm and nuclear norm, \(d\in\mathbb{R}^{n\times n}\) is a measurement of the target matrix. Further, for all \(i\in\mathcal{V}\), \(o_i\in\mathbb{R}^{n\times n}\) is a (0,1)-matrix whose sparsity pattern shows which measurements are available on node \(i\); \(l_i, u_i\in\mathbb{R}^{n\times n}\) are entry-wise upper and lower bound matrices. Note that the constraint \(\norm{x_i}_*\leq \theta\) with \(\theta>0\) aims to promote low rank solutions \cite{wai2017decentralized}. Let \(f_i(x_i)=\norm{o_i\odot(x_i-d)}_F^2\), \(X_i=\{x_i|\norm{x_i}_*\leq \theta, x_i=x_i^\top\}\) and \(Y_i=\{x_i|l_i\leq x_i\leq u_i\}\), then problem \eqref{opt: matrix completion} fits the template \eqref{opt: composite} in Section~\ref{sec: extension}\footnote{The vector space \(\mathbb{R}^n\) used in Section~\ref{sec: extension} can be extended to matrix space \(\mathbb{R}^{n\times n}\) by replacing vector inner product \(\langle x, y\rangle\) with Frobenius inner product \(\tr(x^\top y)\) and vector norm \(\norm{\cdot}_2\) with Frobenius norm \(\norm{\cdot}_F\).}. 

We consider an example of \eqref{opt: matrix completion} that estimates pairwise node distance based on partial noisy measurements on a random geometric graph as follows \cite{montanari2010positioning}. We first uniformly sample \(|\mathcal{V}|=10\) position vectors \(p_1, \ldots, p_{|\mathcal{V}|}\in [0, 1]^3\). Then define \(\mathcal
{G}\) by letting \(\{ij\}\in\mathcal{E}\) if \(\norm{p_i-p_j}_2\leq 0.6\). Let the \(ij\)-th entry of \(d\) be \(\norm{p_i-p_j}_2+\xi_{ij}\) where \(\xi_{ij}\) is sampled from the normal distribution with zero-mean and variance \(0.01\). For all \(i\in\mathcal{V}\), let the \(ij\)-th and \(ji\)-th entry be \(1\) if \(\{ij\}\in\mathcal{E}\) and zero elsewhere; let \(l_i\) to be a zero matrix; let off-diagonal entries in \(u_i\) be \(3\) and diagonal ones be \(0\). 

We test our algorithm~\eqref{alg: RC composite} on such example along with two benchmark methods, distributed projected gradient method (dist. Proj.) \cite{nedic2010constrained} and distributed conditional gradient method (dist. CG) \cite{wai2017decentralized}, see Figure~\ref{fig: convergence}. The convergence of our method is similar to that of the two benchmark methods \cite{nedic2010constrained,wai2017decentralized}. However, the benchmark methods use either linear minimization or projection over set \(X\cap Y\)--to our best knowledge, neither oracle admits efficient solution in our example. In comparison, each iteration of \eqref{alg: RC composite} uses linear minimization over \(X\), which can be computed very efficiently using Lanczo's algorithm (see \cite[Sec. 4.3]{jaggi2013revisiting} for a detailed discussion), and projection onto \(Y\), which amounts to computing entry-wise max/min. Hence the per-iteration computation of our method is much more efficient compared with methods in \cite{nedic2010constrained,wai2017decentralized}. The price for such efficiency is that, rather than ensuring \(x^k\in Y\), our method only ensures \(x^k\) converges to \(Y\) in the sense of Theorem~\ref{thm: 2}.

\begin{figure}
    \centering
    \includegraphics[width=\linewidth]{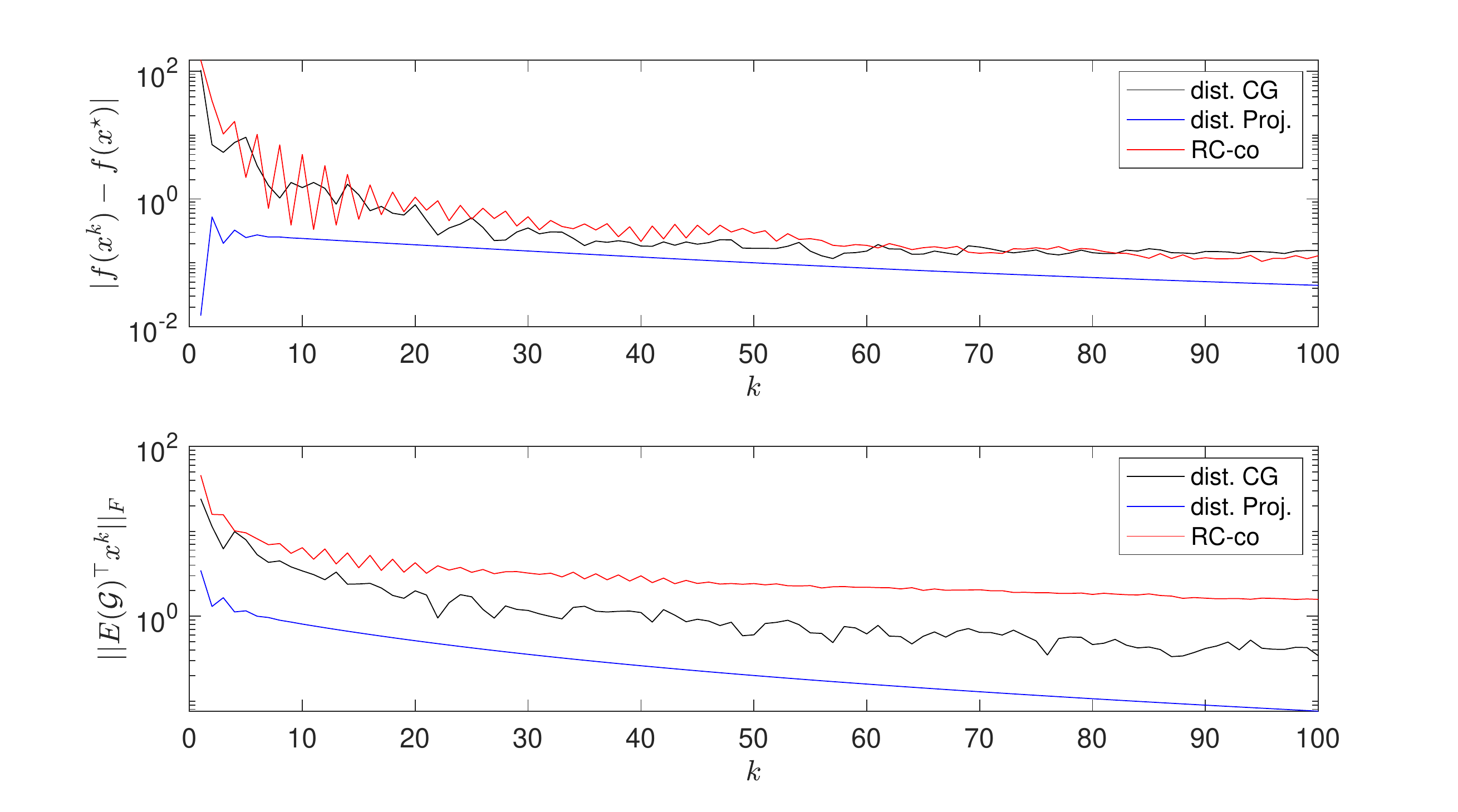}
    \caption{Convergence over iterations.}
    \label{fig: convergence}
\end{figure}

\section{Conclusion}
\label{sec: conclusion}
We propose a novel distributed conditional gradient method with \(O(1/\sqrt{k})\) convergence rate, and extend our results to composite constraints cases. However, our \(O(1/\sqrt{k})\) convergence still mismatches the \(O(1/k)\) convergence of the results in \cite{wai2017decentralized} and it is still unclear whether alternative circuits model such as RLC circuits \cite{yu2019rlc} can yield better algorithm design. Our future direction will focus on addressing these limitations, and non-convex extensions.

\section*{APPENDIX}
In this appendix, we first prove Theorem~\ref{thm: 2} and Corollary~\ref{cor: 2}, then prove Theorem~\ref{thm: 1} and Corollary~\ref{cor: 1} by letting \(Y=\mathbb{R}^{n|\mathcal{V}|}\). We start with the following lemma. 
\begin{lemma}\label{lem: 1} Under the assumptions of Theorem~\ref{thm: 2}, we have
\begin{equation*}
\begin{aligned}
 &f(x^k)-f(x^\star) \\
 \leq & \textstyle\frac{2\sigma^k\delta}{\sqrt{k}}-\textstyle\frac{\sqrt{k}}{2}(\norm{E(\mathcal{G})^\top x^k}_2^2+\norm{x^k-P_Y(x^k)}_2^2),
 \end{aligned}
\end{equation*}
where \(\sigma^k=\frac{\beta}{\sqrt{k}}+(\norm{L(\mathcal{G})}_2+1)r^0\).
\end{lemma}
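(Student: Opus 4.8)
The plan is to read iteration~\eqref{alg: RC composite} as a conditional gradient (Frank--Wolfe) step on the smooth convex penalized objective \(F^k(x)=f(x)+r^k g(x)\), where
\[
g(x)=\tfrac12\norm{E(\mathcal{G})^\top x}_2^2+\tfrac12\norm{x-P_Y(x)}_2^2 .
\]
The two facts that justify this reading, and which I would establish first, are: (i) \(\nabla g(x)=L(\mathcal{G})x+(x-P_Y(x))\), so the linear objective minimized in \eqref{alg: RC composite} is exactly \(\langle\nabla F^k(x^k),y\rangle\); and (ii) \(g\) is convex and \((\norm{L(\mathcal{G})}_2+1)\)-smooth, the consensus term contributing \(\norm{L(\mathcal{G})}_2\) and the squared-distance-to-\(Y\) term contributing \(1\) because \(x\mapsto x-P_Y(x)\) is nonexpansive. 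Hence \(F^k\) is \(\beta_k\)-smooth with \(\beta_k=\beta+r^k(\norm{L(\mathcal{G})}_2+1)=\sqrt{k+1}\,\sigma^{k+1}\); and since \(x^\star\) is feasible for \eqref{opt: composite} (so \(E(\mathcal{G})^\top x^\star=0\) and \(x^\star\in Y\)), we have \(g(x^\star)=0\) and \(F^k(x^\star)=f(x^\star)\).

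Next I would derive the basic per-step inequality. The smoothness descent lemma applied to \(x^{k+1}=x^k+\alpha^k(y^k-x^k)\) gives \(F^k(x^{k+1})\le F^k(x^k)+\alpha^k\langle\nabla F^k(x^k),y^k-x^k\rangle+\tfrac{\beta_k(\alpha^k)^2}{2}\norm{y^k-x^k}_2^2\). Bounding \(\norm{y^k-x^k}_2^2\le\delta\) by the diameter assumption, using the defining optimality of \(y^k\) against the feasible point \(x^\star\in X\) to get \(\langle\nabla F^k(x^k),y^k-x^k\rangle\le\langle\nabla F^k(x^k),x^\star-x^k\rangle\), and then invoking convexity of \(F^k\) together with \(F^k(x^\star)=f(x^\star)\), I obtain
\[
F^k(x^{k+1})-f(x^\star)\le(1-\alpha^k)\bigl(F^k(x^k)-f(x^\star)\bigr)+\tfrac{\beta_k(\alpha^k)^2}{2}\delta .
\]
Writing \(F^k(x^k)=f(x^k)+r^k g(x^k)\), this is a contraction on the penalized gap with additive error \(\tfrac{\beta_k(\alpha^k)^2}{2}\delta=\tfrac{2\sigma^{k+1}\delta}{(k+1)^{3/2}}\) under \(\alpha^k=\tfrac2{k+1}\).

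The final step is to unroll this recursion. With \(\alpha^k=\tfrac2{k+1}\) the factors \(1-\alpha^k=\tfrac{k-1}{k+1}\) telescope cleanly after multiplying by the weight \(k(k+1)\); the weighted additive errors then sum to \(O(k^{3/2})\), and dividing by the weight \(\sim k^2\) yields the claimed \(O(1/\sqrt{k})\) bound. The retained penalty contribution, a positive multiple of \(g(x^k)\) of order \(\sqrt{k}\,g(x^k)\), survives on the left-hand side and becomes the negative term \(-\tfrac{\sqrt k}{2}(\norm{E(\mathcal{G})^\top x^k}_2^2+\norm{x^k-P_Y(x^k)}_2^2)\) after rearrangement; matching its exact coefficient is part of the constant bookkeeping.

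The main obstacle is that the penalty weight \(r^k\) grows with \(k\): the left-hand side of the recursion, \(F^k(x^{k+1})\), uses \(r^k\), whereas chaining into the next step needs \(F^{k+1}(x^{k+1})\), and the discrepancy is the penalty-continuation term \((r^{k+1}-r^k)g(x^{k+1})\). Bounding \(g(x^{k+1})\) crudely by its maximum over the compact set \(X\) is too lossy: the resulting error, weighted and summed, grows like \(\sqrt{k}\) and destroys the rate. The resolution I would use is a strong induction in which the claim being proved itself certifies \(g(x^{k})=O(1/k)\), so that \((r^{k+1}-r^k)g(x^{k+1})=O(k^{-3/2})\) because \(r^{k+1}-r^k=r^0(\sqrt{k+2}-\sqrt{k+1})=O(k^{-1/2})\); these higher-order increments are summable against the telescoping weights. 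This is precisely the point of the tuning \(r^k=r^0\sqrt{k+1}\): it keeps the penalty increments small enough that the continuation error stays at the \(O(1/\sqrt{k})\) scale. Equivalently, as noted in Remark~\ref{rem: equivalence}, \(r^k g\) is the Nesterov/Moreau smoothing of the indicator of the feasible set with parameter \(1/r^k\), and the change-of-smoothing estimate for such envelopes gives the same \(O(k^{-3/2})\) control without passing through the diameter bound; I expect the cleanest write-up to phrase the continuation error through this conjugate representation.
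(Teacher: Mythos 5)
Your framework is the paper's: read \eqref{alg: RC composite} as a Frank--Wolfe step on the penalized objective \(f+r^k h\) with \(h(x)=\tfrac12\norm{E(\mathcal{G})^\top x}_2^2+\tfrac12\norm{x-P_Y(x)}_2^2\), apply the descent lemma with the diameter bound, compare \(y^k\) against the feasible \(x^\star\), and unroll with \(\alpha^k=\tfrac{2}{k+1}\); your smoothness constant \(\beta+(\norm{L(\mathcal{G})}_2+1)r^k\) is also the paper's. You have correctly located the crux --- the penalty-continuation term \((r^k-r^{k-1})h(\cdot)\) created by the growing weight, which a crude diameter bound on \(h\) cannot absorb --- but your resolution of it is where the gap lies. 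The paper needs no induction. Instead of bounding \(\langle\nabla h(x^k),x^\star-x^k\rangle\) by plain convexity (which yields only \(-h(x^k)\)), it exploits the fact that \(x^\star\) is a \emph{minimizer} of \(h\) with \(\nabla h(x^\star)=0\): the smoothness lower bound \eqref{eqn: smooth 2} applied to the \(1\)-smooth function \(\tfrac12\norm{x-P_Y(x)}_2^2\) gives \(\langle x^k-P_Y(x^k),x^\star-x^k\rangle\le-\norm{x^k-P_Y(x^k)}_2^2\), and the exact identity \(\langle L(\mathcal{G})x^k,x^\star-x^k\rangle=-\norm{E(\mathcal{G})^\top x^k}_2^2\) handles the Laplacian part, so that \(\langle\nabla h(x^k),x^\star-x^k\rangle\le-2h(x^k)\). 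The extra \(-\alpha^k r^k h(x^k)\) this produces makes the total coefficient of \(h(x^k)\) equal to \((1-\alpha^k)(r^k-r^{k-1})-\alpha^k r^k\), which is negative for \(\alpha^k=\tfrac{2}{k+1}\), \(r^k=r^0\sqrt{k+1}\), so the continuation term is simply dropped. That one inequality is the missing idea.

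Your substitute --- a strong induction in which the lemma certifies \(h(x^k)=O(1/k)\) --- does not prove the lemma as stated. To extract \(h(x^k)=O(1/k)\) from the induction hypothesis you must lower-bound \(f(x^k)-f(x^\star)\); without the KKT multipliers this gap is only \(O(1)\) below zero, which gives \(h(x^k)=O(1/\sqrt{k})\) and an accumulated continuation error of order \(O(1)\) in \(V^{k+1}\) --- not enough. Invoking Assumption~\ref{asp: KKT composite} gives \(f(x^\star)-f(x^k)\le\rho\sqrt{2h(x^k)}\) and hence \(h(x^k)=O((\rho^2+\sigma^k\delta)/k)\), but then the summed continuation error contributes a \(\rho\)-dependent \(O(1/\sqrt{k})\) term, so the hypothesis you could hope to close is strictly weaker than \(\tfrac{2\sigma^k\delta}{\sqrt{k}}\) (and whether the constants close at all is a fixed-point condition you have not verified). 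The alternative appeal to a ``change-of-smoothing estimate'' is not instantiated and cannot be assessed. So your argument, even if completed, yields a quantitatively different lemma with \(\rho\) in the constant; the firm-nonexpansiveness / \(1\)-smoothness trick is what delivers the stated bound.
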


\begin{proof}
Let 
\begin{equation}\label{eqn: distance definition}
    d(x)=\textstyle\frac{1}{2}\norm{x-P_Y(x)}_2^2, \enskip g(u)=\textstyle\frac{1}{2}\norm{u}_2^2+\max_{y\in Y}\langle u, y\rangle
\end{equation}
Since \(Y\) is convex and compact, one can show that \(d(x)\) is differentiable with \(\nabla d(x)=x-P_Y(x)\) \cite[Ex. 8.53]{rockafellar2009variational}.

In addition, observe that
\begin{equation*}
\begin{aligned}
    \textstyle\max_{u}\langle u, x\rangle-g(u)=&\textstyle\max_{u}-\frac{1}{2}\norm{u}_2^2-\max_{y\in Y}\langle u, y-x\rangle\\
    =&\textstyle \max_u\min_{y\in Y} -\frac{1}{2}\norm{u}_2^2+\langle u, x-y\rangle\\
    =&\textstyle \min_{y\in Y}\max_u -\frac{1}{2}\norm{u}_2^2+\langle u, x-y\rangle\\
    =&\textstyle \min_{y\in Y}\frac{1}{2}\norm{x-y}_2^2=d(x)
\end{aligned}\label{eqn: distance conjugate} 
\end{equation*}
where we swap the \(\max\) and \(\min\) since \(Y\) is bounded and \(-\frac{1}{2}\norm{u}_2^2+\langle u, x-y\rangle\) is convex in \(y\) and concave in \(u\) \cite[Cor.37.3.2]{rockafellar2015convex}. The above equation shows that \(d(x)\) is the conjugate of a \(1\)-strongly convex function \(g(u)\) \cite[Ex. 12.59 ]{rockafellar2009variational}, hence \(d(x)\) is convex and \(1\)-smooth \cite[Prop. 12.60]{rockafellar2009variational}, \ie, \(d(x)=\frac{1}{2}\norm{x-P_Y(x)}_2^2\) satisfies \eqref{eqn: smooth} with \(\beta=1\).

We define the following quantities.
\begin{subequations}
\begin{align}
   h(x)=&\textstyle\frac{1}{2}\norm{x-P_Y(x)}_2^2+\frac{1}{2}\norm{E(\mathcal{G})^\top x}_2^2,\label{eqn: lem1 def h}\\
    V^k=&\textstyle f(x^k)+r^{k-1}h(x^k)-f(x^\star), \label{eqn: lem1 def V}\\
    \Delta^k=&\textstyle \frac{1}{2}(\beta+(\norm{L(\mathcal{G})}_2+1)r^k)\delta.\label{eqn: lem1 def Delta}
\end{align}
\end{subequations}
Based on these definition, we first show the following
\begin{equation}
    \begin{aligned}
    &V^{k+1}-V^k-(r^k-r^{k-1})h(x^k)\\
    =& f(x^{k+1})+r^kh(x^{k+1})-(f(x^k)+r^kh(x^k))\\
    \leq &\textstyle\alpha^k\langle \nabla f(x^k)+r^k\nabla h(x^k), y^k-x^k\rangle\\
    &+\textstyle\frac{\beta+(\norm{L(\mathcal{G})}_2+1)r^k}{2}\norm{\alpha^k(y^k-x^k)}_2^2\\
    \leq &\textstyle\alpha^k\langle \nabla f(x^k)+r^k \nabla h(x^k), y^k-x^k\rangle+(\alpha^k)^2 \Delta^k
    \end{aligned}\label{eqn: lem1 eqn1}
\end{equation}
where the first inequality is an application of \eqref{eqn: smooth 1} to \((\beta+(\norm{L(\mathcal{G})}_2+1)r^k)\)-smooth function \(f(x)+r^k h(x)\); the second inequality is because \(\max_{x, x'\in X}\norm{x-x'}^2_2\leq \delta\). 

From the \(y\)-update in \eqref{alg: RC} we know that
\begin{equation}\label{eqn: lem1 eqn1.1}
    \langle \nabla f(x^k)+r^k\nabla h(x^k), y^k\rangle\leq \langle \nabla f(x^k)+r^k\nabla h(x^k), x^\star\rangle.
\end{equation}
Applying \eqref{eqn: convexity} to convex function \(f\) we can show
\begin{equation}
    \langle \nabla f(x^k), x^\star-x^k\rangle\leq f(x^\star)-f(x^k). \label{eqn: lem1 eqn1.2}
\end{equation}
Applying \eqref{eqn: smooth 2} to \(1\)-smooth function \(d(x)\) gives
\begin{equation}
    \begin{aligned}
        &\langle x^k-P_Y(x^k), x^\star-x^k\rangle \\
        \leq &\textstyle d(x^\star)-d(x^k)-\frac{1}{2}\norm{\nabla d(x^\star)-\nabla d(x^k)}_2^2=-2d(x^k).
    \end{aligned}\label{eqn: lem1 eqn1.3}
\end{equation}
where the last step is because \(\nabla d(x^\star)=x^\star-P_Y(x^\star)=0\), \(d(x^\star)=0\) and \(d(x)=\frac{1}{2}\norm{\nabla d(x)}_2\).
Further, since \(L(\mathcal{G})x^\star=0\), we have
\begin{equation}
    \langle L(\mathcal{G})x^k, x^\star-x^k\rangle=-\norm{E(\mathcal{G})^\top x^k}_2^2.\label{eqn: lem1 eqn1.4}
\end{equation}

Summing up \eqref{eqn: lem1 eqn1}, \(\alpha^k\times\)\eqref{eqn: lem1 eqn1.1}, \(\alpha^k\times\)\eqref{eqn: lem1 eqn1.2}, \(\alpha^kr^k\times\)\eqref{eqn: lem1 eqn1.3} and \(\alpha^kr^k\times\)\eqref{eqn: lem1 eqn1.4} gives the following
\begin{equation*}
\begin{aligned}
  &V^{k+1}-V^k-(r^k-r^{k-1})h(x^k)\\
  \leq &\alpha^k(f(x^\star)-f(x^k))-2\alpha^kr^kh(x^k)+(\alpha^k)^2\Delta^k
\end{aligned}
\end{equation*}
Rearranging terms and use \eqref{eqn: lem1 def V}, we have
\begin{equation}
\begin{aligned}
    & V^{k+1}-(1-\alpha^k)V^k\\
    \leq &((1-\alpha^k)(r^k-r^{k-1})-\alpha^kr^k)h(x^k)+(\alpha^k)^2\Delta^k
\end{aligned}\label{eqn: lem1 eqn2}
\end{equation}
Since \(\alpha^k=\frac{2}{k+1}\), \(r^k=r^0\sqrt{k+1}\), 
\begin{equation}
\begin{aligned}
    &(1-\alpha^k)(r^k-r^{k-1})-\alpha^kr^k\\
    \leq & (1-\alpha^k)(r^k-r^{k-1})- \alpha^kr^{k-1}\\
    =&\textstyle\frac{\big(k-1-\sqrt{k(k+1)}\big)r^0}{\sqrt{k+1}}<\textstyle\frac{-r^0}{\sqrt{k+1}}< 0
\end{aligned}\label{eqn: lem1 eqn2.1}
\end{equation}
Since \(h(x^k)\geq 0\), substituting \eqref{eqn: lem1 eqn2.1} into \eqref{eqn: lem1 eqn2} gives
\begin{equation*}
    V^{k+1}\leq(1-\alpha^k)V^k+(\alpha^k)^2\Delta^k
\end{equation*}
Using this recursion \(k\) times, we can obtain the following 
\begin{equation}
\begin{aligned}
    V^{k+1}\leq & V^1\textstyle \prod\limits_{s=1}^k(1-\alpha^s)+(\alpha^k)^2\Delta^k\\
    &+\textstyle\sum\limits_{s=1}^{k-1}\big[(\alpha^s)^2\Delta^s\prod\limits_{m=s}^{k-1}(1-\alpha^{m+1})\big]\\
    \leq &\textstyle(\alpha^k)^2\Delta^k+\Delta^k\sum\limits_{s=1}^{k-1}\big[(\alpha^s)^2\prod\limits_{m=s}^{k-1}(1-\alpha^{m+1})\big]
\end{aligned}\label{eqn: lem1 eqn3}
\end{equation}
where the last step is because \(1-\alpha^1=0\) and \(\Delta^s\leq \Delta^k\) for all \(s\leq k\), due to \eqref{eqn: lem1 def Delta}. Finally, since \(\alpha^k=\frac{2}{k+1}\), we have \(\prod_{m=s}^{k-1}(1-\alpha^{m+1})=\frac{s(s+1)}{k(k+1)}\) and
\begin{equation*}
    \textstyle\sum\limits_{s=1}^{k-1}\big[(\alpha^s)^2\prod\limits_{m=s}^{k-1}(1-\alpha^m)\big]=\textstyle\sum\limits_{s=1}^{k-1}\frac{4}{(s+1)^2}\frac{s(s+1)}{k(k+1)}< \frac{4(k-1)}{k(k+1)}.
\end{equation*}
Substituting the above inequality into \eqref{eqn: lem1 eqn3} gives
\[\textstyle V^{k+1}\leq \frac{4}{k+1}\big(\frac{1}{k+1}+1-\frac{1}{k}\big)\Delta^k\leq \frac{4}{k+1}\Delta^k,\]
which, combined with \eqref{eqn: lem1 def V} and \eqref{eqn: lem1 def Delta}, completes the proof.
\end{proof}

\emph{Proof of Theorem~\ref{thm: 2}}
Since \eqref{alg: RC composite} ensures that \(x^k\in X\) for all \(k\),  we can use \eqref{kkt composite: dual} and \eqref{def: normal cone} to show that 
\begin{equation*}
    \begin{aligned}
    0\leq &\langle E(\mathcal{G})u^\star+\nabla f(x^\star)+v^\star, x^k-x^\star\rangle\\
    \leq &f(x^k)-f(x^\star)+\langle u^\star, E(\mathcal{G})^\top x^k\rangle+\langle v^\star, x^k-x^\star\rangle
    \end{aligned}
\end{equation*}
where the second step is obtained using \eqref{kkt composite: primal} and \eqref{eqn: convexity}. In addition, since \(v^\star\in N_Y(x^\star)\), we know that
\begin{equation*}
    \langle v^\star, P_Y(x^k)-x^\star\rangle\leq 0
\end{equation*}
Summing up the above two inequalities we have
\begin{equation}
-\langle \begin{bmatrix}
u^\star\\
v^\star
\end{bmatrix}, \begin{bmatrix}
E(\mathcal{G})^\top x^k\\
x^k-P_Y(x^k)
\end{bmatrix}\rangle\leq f(x^k)-f(x^\star) \label{eqn: thm2 eqn1}
\end{equation}
Further, using Cauchy-Schwartz inequality we can show
\begin{equation}
\langle \begin{bmatrix}
u^\star\\
v^\star
\end{bmatrix}, \begin{bmatrix}
E(\mathcal{G})^\top x^k\\
x^k-P_Y(x^k)
\end{bmatrix}\rangle\leq \norm{\begin{bmatrix}
u^\star\\
v^\star
\end{bmatrix}}_2\norm{\begin{bmatrix}
E(\mathcal{G})^\top x^k\\
x^k-P_Y(x^k)
\end{bmatrix}}_2\label{eqn: thm2 eqn2}
\end{equation}

Summing up \eqref{eqn: thm2 eqn1}, \eqref{eqn: thm2 eqn2}, and inequality in Lemma~\ref{lem: 1} gives
\begin{equation}
    \textstyle -\frac{\sqrt{k}}{2}\omega^2+\rho \omega + \frac{2\sigma^k \delta}{\sqrt{k}}\geq 0,\enskip \omega\geq 0.
\end{equation}\label{eqn: thm2 eqn3}
where 
\[\rho = \norm{\begin{bmatrix}
u^\star\\
v^\star
\end{bmatrix}}_2, \enskip \omega= \norm{\begin{bmatrix}
E(\mathcal{G})^\top x^k\\
x^k-P_Y(x^k)
\end{bmatrix}}_2\]
Solving this quadratic inequality in terms of \(\omega\) gives
\begin{equation}
    \begin{aligned}
    0\leq \omega=&\norm{\begin{bmatrix}
E(\mathcal{G})^\top x^k\\
x^k-P_Y(x^k)
\end{bmatrix}}_2\\
\leq & \textstyle  \frac{1}{\sqrt{k}}\big(\rho+\sqrt{\rho^2+4\sigma^k \delta}\big)\leq  \frac{2}{\sqrt{k}}\big(\rho+\sqrt{\sigma^k\delta}\big)
    \end{aligned}\label{eqn: thm2 eqn4}
\end{equation}
where the last step is because \(\sqrt{a^2+b^2}\leq a+b\) for any \(a, b\geq 0\). This proves the second inequality.

Next, substituting \eqref{eqn: thm2 eqn4} into the sum of \eqref{eqn: thm2 eqn1} and \eqref{eqn: thm2 eqn2} gives 
\begin{equation}
    \textstyle -\frac{2}{\sqrt{k}}\rho(\rho+\sqrt{\sigma^k\delta})\leq f(x^k)-f(x^\star).
    \label{eqn: thm2 eqn5}
\end{equation}
Finally, Lemma \ref{lem: 1} directly implies that \(f(x^k)-f(x^\star)\leq \frac{2\sigma^k \delta}{\sqrt{k}}\). Combine this with \eqref{eqn: thm2 eqn5} gives the first inequality.
\qed

\emph{Proof of Corollary~\ref{cor: 2}} If the exact optimal solution in \eqref{alg: RC composite} is replaced by an \(\epsilon^k\)-optimal solution with \(\epsilon^k\leq \kappa\big(\frac{\beta}{\sqrt{k+1}}+(\norm{L(\mathcal{G}}_2+1)r^0\big)\frac{\delta}{\sqrt{k+1}}\), we need to replace \eqref{eqn: lem1 eqn1.1} in the proof of Lemma~\ref{lem: 1} with the following
\begin{equation*}
\begin{aligned}
    \langle \nabla f(x^k)+r^k\nabla h(&x^k), y^k\rangle\leq \langle \nabla f(x^k)+r^k\nabla h(x^k), x^\star\rangle\\
    &+\textstyle \kappa\big(\frac{\beta}{\sqrt{k+1}}+(\norm{L(\mathcal{G}}_2+1)r^0\big)\frac{\delta}{\sqrt{k+1}},
\end{aligned}
\end{equation*}
where \(h(x)\) is defined as in \eqref{eqn: lem1 def h}. Following the rest of the proof of Lemma~\ref{lem: 1} and Theorem~\ref{thm: 2} completes the proof. \qed

\emph{Proof of Theorem~\ref{thm: 1} and Corollary~\ref{cor: 1} } If  \(Y=\mathbb{R}^{|\mathcal{V}|n}\), then \(x-P_Y(x)=0\) and \(N_Y(x)=\{0\}\) for all \(x\). Hence letting \(x^k-P_Y(x^k)=0\) and \(v^\star=0\) in the proof of Theorem~\ref{thm: 2} yields the proof of Theorem~\ref{thm: 1}. Notice that in this case \(h(x)\) in \eqref{eqn: lem1 def h} is \(\norm{L(\mathcal{G})}_2\)-smooth rather than \((\norm{L(\mathcal{G})}_2+1)\)-smooth, causing a change in \(\sigma^k\). The proof of Corollary~\ref{cor: 1} is similar. \qed




\bibliographystyle{IEEEtran}
\bibliography{IEEEabrv,reference}

\end{document}